\newcommand{\subjclassname@later}{\textup{2010} Mathematics Subject Classification}
\newtheorem{theorem}{Theorem}[section]
\newtheorem{lemma}{Lemma}[section]
\newtheorem{remark}{Remark}[section]
\numberwithin{equation}{section} \numberwithin{theorem}{section}
\DeclareMathOperator{\RE}{Re}
\begin{document}
\title{Applications of  Differential Subordination for Functions with Fixed
Second  Coefficient to Geometric Function Theory
\footnote{\emph{2010 Mathematics Subject Classification.} 30C45, 30C80}}
 \author{See Keong Lee \footnote{Email: sklee@cs.usm.my } \\ \em
School of Mathematical Sciences, Universiti Sains Malaysia,\\ \em
11800 USM,  Penang,  Malaysia\\  and \\
 V.  Ravichandran \footnote{Email: vravi@maths.du.ac.in }\\ \em
Department of Mathematics, University of Delhi,\\ \em Delhi 110
007, India\\  and \\
Shamani Supramaniam \footnote{Email: sham105@hotmail.com}\\ \em
School of Mathematical Sciences,
Universiti Sains Malaysia,\\ \em 11800 USM,  Penang,  Malaysia}
 \maketitle

\begin{abstract} The theory of  differential subordination developed by S. S. Miller and P. T. Mocanu [Differential Subordinations, Dekker, New York, 2000] was recently extended to functions with fixed initial coefficient by  R. M. Ali, S. Nagpal\ and\ V. Ravichandran [Second-order differential subordination for analytic functions with fixed initial coefficient, \emph{Bull.\ Malays.\ Math.\ Sci.\ Soc.\ (2)} {\bf 34} (2011), 611--629] and applied to obtain several generalization of classical results in geometric function theory.  In this paper, further applications of this subordination theory is given. In particular,  several sufficient conditions related to starlikeness, convexity, close-to-convexity  of normalized analytic functions  are derived. Connections with previously known results are indicated.
\end{abstract}

\noindent
{\bf Keywords and Phrases:}\emph{ Analytic functions,  starlike functions, convex functions,  subordination, fixed second coefficient. }

\section{Introduction}
For univalent functions $f(z)=z+\sum_{n=2}^\infty a_{n}z^{n}$ defined on  $\mathbb{D}:=\{ z \in \mathbb{C} : |z| < 1 \}$, the famous Bieberbach theorem shows that $|a_{2}|\leq2$ and this bound for the second coefficient yields the growth and distortion bounds as well as covering theorem. In view of the influence of the second coefficient in the properties of univalent functions, several authors have investigated functions with fixed second coefficient. For a brief survey of the various developments, mainly  on  radius problems, from 1920 to this date, see the recent work by Ali {\it et al.}\  \cite{alifil}.  The theory of first-order differential subordination was developed by Miller and Mocanu  and a very comprehensive account of the theory and numerous application can be found in their monograph \cite{millmoc}.  Ali {\it et al.}\  \cite{alibul} have extended this well-known theory of differential subordination to the functions with preassigned second coefficients. Nagpal and Ravichandran \cite{nagapm} have applied the results in \cite{alibul} to obtain several extensions of well-known results to the functions with fixed second coefficient. In this paper, we continue their investigation by deriving several sufficient conditions for starlikeness of functions with fixed second coefficient.

For convenience, let $\mathcal{A}_{n,b}$ denote the class of all functions $f(z)=z+bz^{n+1}+a_{n+2}z^{n+2}+\cdots$ where $n\in \mathbb{N}=\{1,2,\dotsc\}$ and $b$ is a fixed non-negative real number.
For a fixed $\mu\geq0$, let $\mathcal{H}_{\mu,n}$ consists of analytic functions $p$  on $\mathbb{D}$ of the form \begin{equation}\label{eqp}p(z)=1+\mu z^n +p_{n+1}  z^{n+1}+\cdots, \quad n\in \mathbb{N}\end{equation}
Let $\Omega$ be a subset of $ \mathbb{C}$ and the class $\Psi_{\mu,n}[\Omega]$ consists of those functions $\psi:\mathbb{C}^2\rightarrow \mathbb{C}$ that are continuous in a domain $D\subset \mathbb{C}^2$ with $(1,0)\in D$, $\psi(1,0)\in \Omega$, and satisfy the admissibility condition: $\psi(i\rho,\sigma)\not\in \Omega $  whenever $ (i\rho,\sigma)\in D$, $\rho\in \mathbb{R}$, and
\begin{equation}\label{sigma2}\sigma \leq-\frac{1}{2}\left(n+\frac{2-\mu}{2+\mu}\right)(1+\rho^2).\end{equation} When $\Omega=\{w:\RE w>0\}$, let $  \Psi_{\mu,n}:=\Psi_{\mu,n}[\Omega]$. The following theorem is needed to prove our main results.

\begin{theorem} {\rm\cite[Theorem 3.4]{alibul}}\label{thmsumit}\
Let $p\in \mathcal{H}_{\mu,n}$ with $0<\mu\leq2$.
Let $\psi\in \Psi_{n,\mu}$ with associated domain $D$. If $(p(z),zp'(z))\in D$ and $\RE \psi(p(z),zp'(z))>0$, then $\RE p(z)>0$ for $z\in\mathbb{D}$.
\end{theorem}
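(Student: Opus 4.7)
The proof will proceed by contradiction, mirroring the classical Miller--Mocanu scheme but replacing Jack's lemma with its sharpened version for functions with fixed second coefficient. The strategy is: assume $\RE p$ fails to be positive on all of $\mathbb{D}$, locate a boundary point of the positivity region, control $z_{0}p'(z_{0})$ there by the sharper bound built into \eqref{sigma2}, and then invoke admissibility of $\psi$ to reach a contradiction with the hypothesis $\RE\psi(p(z),zp'(z))>0$.

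Concretely, I would first normalize the problem by the auxiliary function $w(z):=(1-p(z))/(1+p(z))$, which satisfies $w(0)=0$ and expands as $w(z)=-\tfrac{\mu}{2}z^{n}+\cdots$ because $p\in\mathcal{H}_{\mu,n}$. Since $p(0)=1$ has positive real part, if the conclusion $\RE p>0$ fails there is a smallest $r_{0}\in(0,1)$ for which $\RE p$ vanishes at some point $z_{0}$ with $|z_{0}|=r_{0}$; equivalently, $|w|$ first attains the value $1$ at $z_{0}$ on the boundary of $|z|<r_{0}$. At this point $p(z_{0})=i\rho$ for some $\rho\in\mathbb{R}$, and a direct computation using $p=(1-w)/(1+w)$ gives $z_{0}p'(z_{0})=-\tfrac{1}{2}(1+\rho^{2})\cdot z_{0}w'(z_{0})/w(z_{0})$.

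The heart of the proof is therefore the fixed-coefficient strengthening of Jack's lemma: if $w$ is analytic on $\mathbb{D}$ with $w(0)=0$, leading coefficient exactly $-\mu/2$ in the $z^{n}$ term, and $|w|$ attains its maximum on $\overline{\mathbb{D}_{r_{0}}}$ at an interior-boundary point $z_{0}$, then
\[
\frac{z_{0}w'(z_{0})}{w(z_{0})}\ \geq\ n+\frac{2-\mu}{2+\mu}.
\]
This is where I would expect the main technical obstacle. The extra $(2-\mu)/(2+\mu)$ over the usual Jack bound $n$ comes from a Schwarz--Pick type extremal argument applied to $w/z^{n}$ (or its restriction to $|z|<r_{0}$), which is constrained at $z=0$ to take the value $-\mu/2$. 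I would prove this by composing with a suitable Möbius self-map of the disk chosen to absorb the fixed second coefficient, then applying the classical sharp Jack lemma to the resulting function; tracking the derivatives through the composition yields the extra $(2-\mu)/(2+\mu)$ term.

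Feeding this back into the relation for $z_{0}p'(z_{0})$ shows $z_{0}p'(z_{0})=\sigma$ with
\[
\sigma\ \leq\ -\frac{1}{2}\!\left(n+\frac{2-\mu}{2+\mu}\right)(1+\rho^{2}),
\]
so $(p(z_{0}),z_{0}p'(z_{0}))=(i\rho,\sigma)$ satisfies the hypothesis of the admissibility condition defining $\Psi_{\mu,n}$. Since this pair lies in $D$, admissibility gives $\psi(i\rho,\sigma)\notin\Omega=\{w:\RE w>0\}$, i.e.\ $\RE\psi(p(z_{0}),z_{0}p'(z_{0}))\leq 0$, contradicting the assumption. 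Hence no such $z_{0}$ exists and $\RE p(z)>0$ for every $z\in\mathbb{D}$. A standard approximation by $p_{r}(z):=p(rz)$ with $r\uparrow 1$ can be inserted at the start if one prefers to avoid boundary behaviour, but the fixed-coefficient data $\mu$ is preserved under this dilation so the argument carries over verbatim.
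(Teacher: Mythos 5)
Your argument is correct and is essentially the proof from the source of this result: the theorem is quoted here from \cite{alibul} without proof, and the proof there is exactly your contradiction scheme — a first contact point $z_0$ with $p(z_0)=i\rho$, the estimate $z_0p'(z_0)=\sigma\le -\tfrac12\bigl(n+\tfrac{2-\mu}{2+\mu}\bigr)(1+\rho^2)$ obtained from the fixed-initial-coefficient refinement of the Jack--Miller--Mocanu lemma (an Osserman-type boundary Schwarz estimate giving $z_0w'(z_0)/w(z_0)\ge n+\frac{1-|c_n|}{1+|c_n|}$ applied with $|c_n|=\mu/2$, whence the term $\frac{2-\mu}{2+\mu}$ and the need for $\mu\le 2$), followed by the admissibility condition \eqref{sigma2} to contradict $\RE\psi(p(z),zp'(z))>0$. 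Two minor remarks: the key refined lemma is only sketched in your write-up, but the statement you isolate is the correct one and is precisely the lemma established in \cite{alibul}; and your closing comment that the dilation $p(rz)$ preserves the fixed-coefficient data is inaccurate (the coefficient becomes $\mu r^{n}<\mu$), although this only makes the derivative bound stronger, so that optional approximation step — which your main argument does not actually need, since $z_0$ is an interior point — still goes through.
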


 For $\alpha\neq 1$, let
\[ \mathcal{S}^*(\alpha):=\left\{f\in\mathcal{A}: \frac{zf'(z)}{f(z)} \prec \frac{1+(1-2\alpha)z}{1-z}\right\}.\]
The function $p_\alpha(z):=(1+(1-2\alpha)z)/(1-z)$ maps $\mathbb{D}$ onto $\{w\in\mathbb{C}:\RE w >\alpha\}$ for $\alpha<1$ and
onto  $\{w\in\mathbb{C}:\RE w <\alpha\}$ for $\alpha>1$. Therefore, for $\alpha<1$,  $\mathcal{S}^*(\alpha)$ is the class of starlike functions of order $\alpha$ consisting of functions $f\in\mathcal{A}$ for which $\RE(zf'(z)/f(z))>\alpha$.  For $\alpha>1$, $\mathcal{S}^*(\alpha)$ reduces to the class $\mathcal{M}(\alpha)$ consisting of  $ f\in\mathcal{A}$ satisfying $\RE(zf'(z)/f(z))  < \alpha$.  The latter  class $\mathcal{M}(\alpha)$ and its subclasses were investigated in   \cite{rmavr,owa2,vrkgs,ural,ural2}.   For $0\leq\alpha<1$,  $\mathcal{S}^*(\alpha)$  consists of only univalent functions while for other values of $\alpha$, the class contains non-univalent functions. Other classes can be unified in a similar manner by subordination.

 Motivated by the works of   Lewandowski,  Miller\ and  Z\l otkiewicz \cite{Lewandowski1}, several authors \cite{liutam,liumcm,nishi,owa2,Nunokawa1,Obradovich,Padmanabhan1,KSP,Ravi2,singhaml,yangcma} have investigated the functions $f$ for which $zf'(z)/f(z)\cdot$ $(\alpha z f''(z)/f'(z)+1)$ lies in certain region in the right half-plane.
For $\alpha\geq0$  and $\beta<1$,  Ravichandran {\it et al.}\  \cite{ravijipam} have shown that a function $f$ of the form $f(z)=z+a_{n+1}z^{n+1}+\cdots$ satisfying \begin{equation}\label{eqzf}\RE \left(\frac{zf'(z)}{f(z)}\left(\alpha\frac{zf''(z)}{f'(z)}+1\right)\right)>\alpha\beta\left(\beta+\frac{n}{2}-1\right)+\beta-\frac{\alpha n}{2} \end{equation}
 is starlike of order $\beta$.  In the first result of Theorem \ref{th2.7}, we obtain the corresponding result for $f\in \mathcal{A}_{n,b}$.

For function $p$ of the form $p(z)=1+p_1 z+p_2 z^2+\cdots$, Nunokawa {\it et al.}\  \cite{nunoind} showed that  for analytic function $w$, $\alpha p^2(z)+\beta zp'(z)\prec w(z)$ implies $\RE p(z)>0$, where $\beta>0$, $\alpha\geq -\beta/2$. See also \cite{jayafjms}.  Lemma \ref{th2.6} investigate the conditions for similar class of functions.

For complex numbers $\beta$ and $\gamma$ with $\beta\neq 0$, the differential subordination  \[q(z)+\frac{zq'(z)}{\beta q(z)+\gamma}\prec h(z),\] where $q$ is analytic and $h$ is univalent with $q(0) = h(0)$, is popularly known
as Briot-Bouquet differential subordination.
 This particular differential subordination has a significant number of important applications in the theory of analytic functions (for details
see \cite{millmoc}).  The importance of Briot-Bouquet differential subordination inspired many researchers
to work in this area and many generalizations and extensions of the Briot-Bouquet differential subordination have recently been obtained. Ali {\it et al.}\  \cite{alijmaa} obtained several results related to the Briot-Bouquet differential subordination. In Lemmas \ref{th2.2} and \ref{th2.5}, the  Briot-Bouquet differential subordination is investigated for functions with fixed second coefficient.

\section{Subordinations for starlikeness and univalence}
 For $\beta\neq 1$, Theorem~\ref{th2.7} provides several sufficient conditions for $f\in \mathcal{S}^*(\beta)$; in particular, for $0\leq \beta<1$, these are sufficient conditions for  starlikeness of order $\beta$.  Theorem~\ref{th2.7new} is  the meromorphic analogue of Theorem~\ref{th2.7}.
 Theorem~\ref{th2.8} gives sufficient conditions for the subordination $f'(z)\prec (1+(1-2\beta)z)/(1-z)$ to hold. For $\beta=0$, this latter condition is sufficient for the close-to-convexity and hence univalence of the function $f$.

\begin{theorem}\label{th2.7}
Let $\alpha\geq0$, $\beta\neq 1$,   and $0\leq\mu=nb\leq2$. Let $\delta_1$, $\delta_2$, $\delta_3$ and $\delta_4$ be given by
 \begin{align*}
 \delta_1&=-\frac{\alpha}{2}(1-\beta)\left(n+\frac{2-\mu}{2+\mu}\right)
+(1-\alpha)\beta+\alpha\beta^2,\\
\delta_2&=-\frac{1}{2}(1-\beta)\left(n+\frac{2-\mu}{2+\mu}\right)+\beta,\\
\delta_3&=\left\{
                                                           \begin{array}{ll}
                                                             \frac{-\alpha\beta}{2(1-\beta)}\left(n+\frac{2-\mu}{2+\mu}\right)+\beta, & if \quad \hbox{$\beta\leq \frac{1}{2}$,} \\[5pt]
                                                             %\frac{-\alpha}{2}\left(n+\frac{2-\mu}{2+\mu}\right)+\frac{1}{2}, & if \quad \hbox{$\beta= \frac{1}{2}$,} \\[5pt]
                                                             \frac{-\alpha}{2\beta}(1-\beta)\left(n+\frac{2-\mu}{2+\mu}\right)+\beta, & if \quad \hbox{$\frac{1}{2}\leq\beta$,}
                                                           \end{array}
                                                         \right. \\
\delta_4&=\left\{
\begin{array}{ll}
    \frac{-\beta}{2(1-\beta)}\left(n+\frac{2-\mu}{2+\mu}\right), & if \quad \hbox{$\beta< \frac{1}{2}$,} \\[5pt]
         \frac{-1}{2\beta}(1-\beta)\left(n+\frac{2-\mu}{2+\mu}\right), & if \quad \hbox{$\frac{1}{2}\leq\beta$}.
     \end{array}
     \right.
 \end{align*}
 If $f\in {\mathcal A}_{n,b}$ satisfies one of the following subordinations
\begin{align}\label{eq2.7}&\frac{zf'(z)}{f(z)}\left(\alpha\frac{zf''(z)}{f'(z)}+1\right)
\prec \frac{1+(1-2\delta_1)z}{ 1-z},\\
&\frac{zf'(z)}{f(z)}\left(2+\frac{zf''(z)}{f'(z)}-\frac{zf'(z)}{f(z)}\right)
\prec \frac{1+(1-2\delta_2)z}{ 1-z},\\
&(1-\alpha)\frac{zf'(z)}{f(z)}+\alpha\left(1+\frac{zf''(z)}{f'(z)}\right)\prec \frac{1+(1-2\delta_3)z}{ 1-z},\\
  &1+\frac{zf''(z)}{f'(z)}-\frac{zf'(z)}{f(z)}
  \prec -\frac{ 2\delta_4 z}{ 1-z}\end{align}
then \[\frac{zf'(z)}{f(z)}\prec \frac{1+(1-2\beta)z}{ 1-z}.\]
\end{theorem}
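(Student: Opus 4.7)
The plan is to apply Theorem~\ref{thmsumit} to $p(z):=zf'(z)/f(z)\in\mathcal{H}_{\mu,n}$ with $\mu=nb$ (which holds because $f\in\mathcal{A}_{n,b}$). The desired conclusion $p\prec(1+(1-2\beta)z)/(1-z)$ is equivalent to $\RE P(z)>0$, where $P(z):=(p(z)-\beta)/(1-\beta)$. Using the identity $1+zf''(z)/f'(z)=p(z)+zp'(z)/p(z)$, each of the four left-hand sides rewrites as a function $\Phi_i(p(z),zp'(z))$:
\[
\Phi_1(r,s)=(1-\alpha)r+\alpha r^2+\alpha s,\quad \Phi_2(r,s)=r+s,\quad \Phi_3(r,s)=r+\alpha\tfrac{s}{r},\quad \Phi_4(r,s)=\tfrac{s}{r}.
\]
After the change of variables $r=(1-\beta)R+\beta$, $s=(1-\beta)S$, each hypothesis becomes $\RE\psi_i(P(z),zP'(z))>0$ with
\[
\psi_i(R,S):=\Phi_i\bigl((1-\beta)R+\beta,\,(1-\beta)S\bigr)-\delta_i,
\]
so Theorem~\ref{thmsumit} will deliver $\RE P>0$ on $\mathbb{D}$ once $\psi_i\in\Psi_{\mu,n}$ is verified.

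The technical heart is the admissibility check $\RE\psi_i(i\rho,\sigma)\le 0$ whenever $\rho\in\mathbb{R}$ and $\sigma\le-\tfrac12\bigl(n+(2-\mu)/(2+\mu)\bigr)(1+\rho^2)$. For $\psi_1$ a direct calculation gives
\[
\RE\psi_1(i\rho,\sigma)=(1-\alpha)\beta+\alpha\beta^2-\alpha(1-\beta)^2\rho^2+\alpha(1-\beta)\sigma-\delta_1,
\]
and substituting the bound on $\sigma$ yields a concave quadratic in $\rho$ maximized at $\rho=0$; the condition that this maximum be $\le 0$ pins down $\delta_1$ to exactly the stated value, and the identical recipe produces $\delta_2$ from $\psi_2$. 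For $\psi_3$ and $\psi_4$, the factor $s/r$ makes $\RE\psi_i(i\rho,\sigma)$ a rational function of $\rho^2$ containing the quotient $(1+\rho^2)/(\beta^2+(1-\beta)^2\rho^2)$. A short calculation shows this quotient is monotone with derivative of sign $2\beta-1$, so its extremum is $1/\beta^2$ at $\rho=0$ when $\beta<1/2$ and $1/(1-\beta)^2$ as $|\rho|\to\infty$ when $\beta\ge 1/2$; substituting these extrema yields the two-branch formulas for $\delta_3$ and $\delta_4$.

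The principal obstacle is exactly this case split for $\delta_3$ and $\delta_4$, since it is the monotonicity flip of $(1+\rho^2)/(\beta^2+(1-\beta)^2\rho^2)$ at $\beta=1/2$ that forces the piecewise definitions. A secondary bookkeeping issue arises when $\beta>1$: then $1-\beta<0$ and the image of the superordinating function is a left half-plane, so the statement $\RE P>0$ must be interpreted so that it still corresponds to the desired one-sided containment for $p$; the algebra of the admissibility verification, however, is formally identical once the signs are tracked carefully.
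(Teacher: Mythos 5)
Your route is the paper's route: the paper proves this theorem by setting $p(z)=zf'(z)/f(z)\in\mathcal{H}_{\mu,n}$ with $\mu=nb$, rewriting the four left-hand sides exactly as your $\Phi_1,\dots,\Phi_4$, and then invoking Lemmas \ref{th2.1}--\ref{th2.4}, whose proofs are precisely the shifted admissibility verifications you sketch; your computations leading to $\delta_1$ and $\delta_2$ agree with Lemmas \ref{th2.1} and \ref{th2.2}.

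There is, however, a concrete error in your handling of $\delta_3$ and $\delta_4$: the extremum assignment for $g(\rho^2):=(1+\rho^2)/(\beta^2+(1-\beta)^2\rho^2)$ is swapped. After inserting the worst-case $\sigma$, one gets $\RE\psi_3(i\rho,\sigma)\le-\tfrac{1}{2}\alpha\beta(1-\beta)\bigl(n+\tfrac{2-\mu}{2+\mu}\bigr)g(\rho^2)+\beta-\delta_3$ (and the same without $\alpha$ for $\psi_4$). For $0<\beta<1$, $\alpha>0$ the coefficient multiplying $g$ is negative, so the quantity you must substitute is the \emph{infimum} of $g$ over $\rho\in\mathbb{R}$, not its maximum. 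Since $g'$ has the sign of $2\beta-1$ (as you correctly note), that infimum is $g(0)=1/\beta^2$ when $\beta\ge\tfrac12$ and the limiting value $1/(1-\beta)^2$ as $|\rho|\to\infty$ when $\beta\le\tfrac12$ --- the opposite of your assignment. As written, your substitution replaces $g$ by its supremum, which does not bound $\RE\psi_i(i\rho,\sigma)$ from above, and it would in any case produce the two branches of $\delta_3$ and $\delta_4$ interchanged, contradicting the statement you are proving; correcting the swap recovers the paper's Lemmas \ref{th2.3} and \ref{th2.4}. A smaller remark: your closing caveat about $\beta>1$ is not actually disposed of by ``tracking signs,'' since for $1-\beta<0$ the inequality obtained from the bound on $\sigma$ reverses; the paper's own lemmas share this looseness, so both arguments are only complete as written for $\beta<1$ (and the $\delta_3$, $\delta_4$ parts for $0<\beta<1$).
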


Our next theorem gives sufficient conditions for meromorphic functions to be starlike in the punctured unit disk $\mathbb{D}^*:=\{z\in\mathbb{C}: 0<|z|<1\}$. Precisely, we consider the class
 $ {\Sigma}_{n,b}$ of all analytic functions defined on $\mathbb{D}^*$ of the form
\[ f(z)=\frac{1}{z}+bz^n +a_{n+1}z^{n+1}+\cdots \quad (b\leq0). \]

\begin{theorem}\label{th2.7new}
Let $\alpha\geq0$, $\beta\neq 1$,   and $0\leq \mu=-(n+1)b\leq2$. Let $\delta_1$, $\delta_2$, $\delta_3$ and $\delta_4$ be given by
 \begin{align*}
 \delta_1&=\frac{\alpha}{2}(1-\beta)\left(n+\frac{2-\mu}{2+\mu}\right)
+(1-\alpha)\beta+\alpha\beta^2,\\
\delta_2&=\frac{1}{2}(1-\beta)\left(n+\frac{2-\mu}{2+\mu}\right)+\beta,\\
\delta_3&=\left\{
                                                           \begin{array}{ll}
                                                             \frac{\alpha\beta}{2(1-\beta)}\left(n+\frac{2-\mu}{2+\mu}\right)+\beta, & if \quad \hbox{$\beta\leq \frac{1}{2}$,} \\[5pt]
                                                             %\frac{-\alpha}{2}\left(n+\frac{2-\mu}{2+\mu}\right)+\frac{1}{2}, & if \quad \hbox{$\beta= \frac{1}{2}$,} \\[5pt]
                                                             \frac{\alpha}{2\beta}(1-\beta)\left(n+\frac{2-\mu}{2+\mu}\right)+\beta, & if \quad \hbox{$\frac{1}{2}\leq\beta$,}
                                                           \end{array}
                                                         \right. \\
\delta_4&=\left\{
\begin{array}{ll}
    \frac{-\beta}{2(1-\beta)}\left(n+\frac{2-\mu}{2+\mu}\right), & if \quad \hbox{$\beta< \frac{1}{2}$,} \\[5pt]
         \frac{-1}{2\beta}(1-\beta)\left(n+\frac{2-\mu}{2+\mu}\right), & if \quad \hbox{$\frac{1}{2}\leq\beta$}.
     \end{array}
     \right.
 \end{align*}
 If $f\in \Sigma_{n,b}$ satisfies one of the following subordinations
\begin{align}\label{eq2.7new}&\frac{zf'(z)}{f(z)}\left(2\alpha-1+\alpha\frac{zf''(z)}{f'(z)}\right)
\prec \frac{1+(1-2\delta_1)z}{ 1-z},\\
&\frac{zf'(z)}{f(z)}\left(\frac{zf''(z)}{f'(z)}-\frac{zf'(z)}{f(z)}\right)
\prec \frac{1+(1-2\delta_2)z}{ 1-z},\\
&-\left((1-\alpha)\frac{zf'(z)}{f(z)}+\alpha\left(1+\frac{zf''(z)}{f'(z)}\right)\right)\prec \frac{1+(1-2\delta_3)z}{ 1-z},\\
  & 1+\frac{zf''(z)}{f'(z)}-\frac{zf'(z)}{f(z)}
  \prec -\frac{  2\delta_4 z}{ 1-z}\end{align}
then \[-\frac{zf'(z)}{f(z)}\prec \frac{1+(1-2\beta)z}{ 1-z}.\]
\end{theorem}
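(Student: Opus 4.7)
The plan is to parallel the proof of Theorem~\ref{th2.7}, replacing the starlikeness quantity $zf'(z)/f(z)$ by $-zf'(z)/f(z)$ in order to accommodate the pole of $f$ at the origin. Set
\[
 p(z) := -\frac{zf'(z)}{f(z)}.
\]
A direct series expansion starting from $f(z) = 1/z + bz^{n} + a_{n+1}z^{n+1} + \cdots$ gives $p(z) = 1 - (n+1)b\, z^{n+1} + \cdots$, so $p(0) = 1$ and the coefficient of the first nonconstant term of $p$ equals $\mu = -(n+1)b$. Since $b\leq 0$ one has $\mu\geq 0$, and the hypothesis $\mu \leq 2$ is exactly the range to which Theorem~\ref{thmsumit} applies.

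Next I would rewrite each left-hand side appearing in the hypothesis as a function of $(p(z),zp'(z))$. Differentiating the defining relation for $p$ produces the identities
\[
\frac{z^2 f''(z)}{f(z)} = p + p^2 - zp', \qquad \frac{zf''(z)}{f'(z)} = -1 - p + \frac{zp'}{p},
\]
and substituting these into the four expressions yields, after simplification,
\begin{align*}
\psi_1(p, zp') &= (1-\alpha) p + \alpha p^{2} - \alpha zp',\\
\psi_2(p, zp') &= p - zp',\\
\psi_3(p, zp') &= p - \alpha \frac{zp'}{p},\\
\psi_4(p, zp') &= \frac{zp'}{p}.
\end{align*}
These are the direct meromorphic analogues of the expressions that appear in Theorem~\ref{th2.7}; the sign flip of the term involving $zp'$ relative to the analytic setting accounts precisely for the corresponding sign changes in the formulas for $\delta_1$, $\delta_2$ and $\delta_3$.

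To pass to the conclusion, I would introduce the normalization $P(z) := (p(z) - \beta)/(1-\beta)$, so that $P(0) = 1$ and the desired subordination $p \prec (1+(1-2\beta)z)/(1-z)$ becomes $\RE P(z) > 0$. Substituting $p = (1-\beta)P + \beta$ and $zp' = (1-\beta)zP'$ into each $\psi_i - \delta_i$ recasts the hypothesis as $\RE \Psi_i(P, zP') > 0$ for a function $\Psi_i$ to which Theorem~\ref{thmsumit} can be applied.

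The main obstacle --- and the place where the particular values of $\delta_1,\ldots,\delta_4$ are pinned down --- is the verification of the admissibility condition
\[
\RE \Psi_i(i\rho, \sigma) \leq 0 \qquad \text{whenever } \sigma \leq -\tfrac{1}{2}\Bigl(n + \tfrac{2-\mu}{2+\mu}\Bigr)(1+\rho^{2}).
\]
Each $\delta_i$ is chosen so that the extremal case $\rho = 0$ with $\sigma$ at its bound produces equality; for general $\rho$ the inequality reduces to a quadratic in $\rho^{2}$ whose non-positivity is then straightforward. For $\delta_3$ and $\delta_4$, the coefficient of $\rho^{2}$ in this quadratic changes sign at $\beta = \tfrac{1}{2}$, which forces the piecewise definitions appearing in the statement. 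Once admissibility is established in each of the cases, Theorem~\ref{thmsumit} yields $\RE P(z) > 0$, which is the desired subordination.
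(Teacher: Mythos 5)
Your setup is the same as the paper's: the same function $p(z)=-zf'(z)/f(z)$ with $p(z)=1-(n+1)bz^{n+1}+\cdots$, and the same four identities rewriting the hypotheses as $(1-\alpha)p+\alpha p^{2}-\alpha zp'$, $p-zp'$, $p-\alpha zp'/p$ and $zp'/p$ (the paper then simply cites Lemmas \ref{th2.1}--\ref{th2.4}). The gap is in the step you declare ``straightforward'': the admissibility verification for the sign-flipped functionals. After the normalization $P=(p-\beta)/(1-\beta)$, the term $zp'$ enters $\Psi_1$ and $\Psi_2$ (and, for $0<\beta<1$, $\Psi_3$) with a \emph{negative} coefficient. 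Take the second case: $\Psi_2(r,s)=(1-\beta)r-(1-\beta)s+\beta-\delta_2$, so $\RE\Psi_2(i\rho,\sigma)=-(1-\beta)\sigma+\beta-\delta_2$. The admissibility region \eqref{sigma2} bounds $\sigma$ only from above; it is unbounded below, so for $\beta<1$ one has $\RE\Psi_2(i\rho,\sigma)\to+\infty$ as $\sigma\to-\infty$, and the requirement $\RE\Psi_2(i\rho,\sigma)\le 0$ on the whole admissible set fails, i.e.\ $\Psi_2\notin\Psi_{\mu,n}$. Your claim that the extremal configuration is ``$\rho=0$ with $\sigma$ at its bound'' and that the rest is a quadratic in $\rho^{2}$ is exactly where the argument breaks: when the $s$-coefficient is negative, substituting the upper bound for $\sigma$ gives a \emph{lower} bound on $\RE\Psi_i(i\rho,\sigma)$, not an upper bound. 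The same problem occurs for $\Psi_1$ (coefficient $-\alpha(1-\beta)$ of $s$) and for $\Psi_3$ when $0<\beta<1$. Only the fourth case, where the functional is $zp'/p$ exactly as in Lemma \ref{th2.4}, goes through as you describe, which is consistent with $\delta_4$ being unchanged from Theorem \ref{th2.7}.

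To be fair, the paper's own proof is terse at precisely this point: it derives the same identities and then invokes Lemmas \ref{th2.1}--\ref{th2.3}, although those lemmas are stated for $+\gamma zp'$ with $\gamma>0$ and their proofs use $\gamma>0$ exactly where $\gamma(1-\beta)\sigma$ is estimated via \eqref{sigma2}; the reversed signs in $\delta_1,\delta_2,\delta_3$ of Theorem \ref{th2.7new} are what one gets by formally setting $\gamma=-\alpha$ or $\gamma=-1$, which those lemmas do not license. Whatever justification is supplied there, it cannot be the direct admissibility computation you sketch, since with a negative $zp'$-coefficient (and $\beta<1$) the functional simply is not in $\Psi_{\mu,n}$ and Theorem \ref{thmsumit} does not apply. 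A complete write-up would have to prove sign-reversed analogues of Lemmas \ref{th2.1}--\ref{th2.3} by a genuinely different argument (or exploit extra information coming from the hypothesis, e.g.\ coefficient constraints), and your proposal does not supply that missing ingredient.
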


\begin{theorem}\label{th2.8}
Let $\alpha\geq0$, $\beta\neq 1$,  and $0\leq\mu=(n+1)b\leq2$. Let $\delta_1$, $\delta_2$, $\delta_3$ and $\delta_4$ be given as in Theorem 2.1. If $f\in {\mathcal A}_{n,b}$ satisfies one of the following subordinations
\begin{align}\label{eq2.8}
&f'(z)\left[\alpha\left(\frac{zf''(z)}{f'(z)}+f'(z)-1\right)+1\right]\prec \frac{1+(1-2\delta_1)z}{ 1-z}, \\
\label{eq2.12}& f'(z)+zf''(z)
\prec \frac{1+(1-2\delta_2)z}{ 1-z},\\
\label{eq2.13}& \alpha\frac{zf''(z)}{f'(z)}+f'(z)\prec \frac{1+(1-2\delta_3)z}{ 1-z}, \\
 \label{eq2.14} & \frac{zf''(z)}{f'(z)}\prec -\frac{  2\delta_4 z}{ 1-z} \end{align}
then \[f'(z)\prec \frac{1+(1-2\beta)z}{ 1-z}.\]
\end{theorem}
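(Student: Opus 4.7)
The plan is to apply the fixed-second-coefficient admissibility principle of Theorem \ref{thmsumit} (or, more precisely, the Miller--Mocanu-type lemma from \cite{alibul} that underlies it). Set $p(z):=f'(z)$; since $f\in\mathcal{A}_{n,b}$, one has $p(z)=1+(n+1)bz^n+\cdots=1+\mu z^n+\cdots$, so $p\in\mathcal{H}_{\mu,n}$, and the target conclusion $f'(z)\prec(1+(1-2\beta)z)/(1-z)$ becomes exactly $p\prec p_\beta$. Each of the four left-hand sides is of the form $\psi_i(p(z),zp'(z))$ for a continuous $\psi_i:\mathbb{C}^2\to\mathbb{C}$; writing $r=p(z)$ and $s=zp'(z)$, direct expansion yields
\[
\psi_1(r,s)=(1-\alpha)r+\alpha r^2+\alpha s,\quad \psi_2(r,s)=r+s,\quad \psi_3(r,s)=r+\tfrac{\alpha s}{r},\quad \psi_4(r,s)=\tfrac{s}{r}.
\]

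Suppose toward contradiction that $p\not\prec p_\beta$. The Miller--Mocanu-type lemma with fixed initial coefficient, applied to the pair $(p,p_\beta)$, furnishes $z_0\in\mathbb{D}$ at which $p(z_0)=\beta+i\tau$ for some $\tau\in\mathbb{R}$, together with $z_0p'(z_0)=\sigma$ satisfying
\[
\sigma\;\leq\;-\tfrac{1}{2}\Bigl(n+\tfrac{2-\mu}{2+\mu}\Bigr)\tfrac{(1-\beta)^2+\tau^2}{1-\beta};
\]
this is the analogue of \eqref{sigma2} with the boundary value $\beta+i\tau$ of $p_\beta$ in place of $i\rho$. Plugging $(r,s)=(\beta+i\tau,\sigma)$ into each $\psi_i$ and estimating the real part with the preceding bound will place $\psi_i(p(z_0),z_0p'(z_0))$ outside the image of the subordinating right-hand side, contradicting the hypothesis. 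For \eqref{eq2.8}, $\RE\psi_1=(1-\alpha)\beta+\alpha\beta^2-\alpha\tau^2+\alpha\sigma$; inserting the $\sigma$-bound, the $\tau^2$ terms combine into a nonpositive contribution and the constant piece collapses precisely to $\delta_1$. Case \eqref{eq2.12} is easier: $\RE\psi_2=\beta+\sigma\leq\delta_2$. For \eqref{eq2.13} and \eqref{eq2.14}, the real parts reduce to $\beta+\alpha\beta\sigma/(\beta^2+\tau^2)$ and $\beta\sigma/(\beta^2+\tau^2)$, respectively, and the analysis hinges on
\[
\min_{\tau\in\mathbb{R}}\frac{(1-\beta)^2+\tau^2}{\beta^2+\tau^2}=\begin{cases}1,&\beta\leq 1/2,\\ (1-\beta)^2/\beta^2,&\beta\geq 1/2,\end{cases}
\]
which is exactly what produces the two-branch definitions of $\delta_3$ and $\delta_4$.

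The main obstacle is the sign bookkeeping in cases \eqref{eq2.13} and \eqref{eq2.14}: after multiplying the (sign-definite) $\sigma$-estimate by $\beta/(\beta^2+\tau^2)$, the direction of the inequality depends on the sign of $\beta$, and $\delta_4$ may be either positive or negative, so the image half-plane of $h_4(z)=-2\delta_4 z/(1-z)$ flips across the line $\RE w=\delta_4$. One must verify in each sign regime that the estimate on $\RE\psi_i(p(z_0),\sigma)$ indeed lands on the wrong side of the relevant boundary to close the contradiction. Beyond this bookkeeping, the argument is a near verbatim adaptation of the proof of Theorem \ref{th2.7} with $p(z)=zf'(z)/f(z)$ replaced by $p(z)=f'(z)$, and the remaining calculations are routine.
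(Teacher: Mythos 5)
Your proposal is correct and essentially the paper's own argument: the paper likewise sets $p(z)=f'(z)\in\mathcal{H}_{\mu,n}$ with $\mu=(n+1)b$, uses exactly your four identities $\psi_1(r,s)=(1-\alpha)r+\alpha r^2+\alpha s$, $r+s$, $r+\alpha s/r$, $s/r$, and then quotes its Lemmas \ref{th2.1}--\ref{th2.4}, whose proofs are precisely your contact-point estimates rewritten for $q=(p-\beta)/(1-\beta)$ and Theorem \ref{thmsumit} (your $\sigma$-bound and the two-branch minimization of $\bigl((1-\beta)^2+\tau^2\bigr)/(\beta^2+\tau^2)$ become the paper's bound \eqref{sigma2} and its minimization of $(1+\rho^2)/\bigl(\beta^2+(1-\beta)^2\rho^2\bigr)$ under $\tau=(1-\beta)\rho$, yielding the same $\delta_1,\dots,\delta_4$). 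The sign bookkeeping you flag is treated no more carefully in the paper (its lemmas tacitly use $1-\beta>0$, resp.\ $\beta(1-\beta)>0$, when multiplying the $\sigma$-estimate), so your sketch follows the same route at essentially the same level of detail.
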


The proof of these  theorems follows from the following series of lemmas.

\begin{lemma}\label{th2.1} Let $\alpha\geq0$, $\beta\neq 1$, $\gamma>0$,  and $0\leq\mu\leq2$. For function $p\in \mathcal{H}_{\mu,n}$ and \[\delta:=-\frac{\gamma}{2}(1-\beta)\left(n+\frac{2-\mu}{2+\mu}\right)+(1-\alpha)\beta+\alpha\beta^2,\] if $p$ satisfies
\begin{equation}\label{eq1}
 (1-\alpha)p(z)+\alpha p^2(z) +\gamma zp'(z)\prec\frac{1+(1-2\delta)z}{1-z}  \end{equation} then \[ p(z)\prec \frac{1+(1-2\beta)z}{1-z} .\]
\end{lemma}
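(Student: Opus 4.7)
The plan is to reduce the claim to an application of Theorem~\ref{thmsumit} via the substitution $q(z) := (p(z) - \beta)/(1-\beta)$, which is well-defined since $\beta \neq 1$ and gives an analytic function with $q(0) = 1$. The desired subordination $p(z) \prec (1+(1-2\beta)z)/(1-z)$ is equivalent to $\RE q(z) > 0$ on $\mathbb{D}$ (modulo the sign of $1-\beta$), and from the expansion of $p$ one reads off that $q$ has second coefficient $\mu/(1-\beta)$, placing $q$ in the appropriate class $\mathcal{H}_{\mu/(1-\beta),\,n}$.

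Substituting $p = \beta + (1-\beta)q$ into the hypothesized subordination and regrouping yields
\[
(1-\alpha)p + \alpha p^2 + \gamma zp' - \delta \;=\; \psi\bigl(q(z),\,zq'(z)\bigr),
\]
where
\[
\psi(r,s) = (1-\beta)(1-\alpha+2\alpha\beta)\,r + \alpha(1-\beta)^2\,r^2 + \gamma(1-\beta)\,s + \tfrac{\gamma(1-\beta)}{2}\Bigl(n+\tfrac{2-\mu}{2+\mu}\Bigr).
\]
The key identity behind this shape is $(1-\alpha)\beta + \alpha\beta^2 - \delta = \tfrac{\gamma(1-\beta)}{2}(n+\tfrac{2-\mu}{2+\mu})$, built into the definition of $\delta$ and indeed the reason for that particular formula. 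The hypothesis $\RE[(1-\alpha)p+\alpha p^2 + \gamma zp'] > \delta$ therefore translates into $\RE\psi(q(z),zq'(z)) > 0$.

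The crux is then to verify that $\psi$ satisfies the admissibility condition required to apply Theorem~\ref{thmsumit} to $q$. A direct computation gives
\[
\RE\psi(i\rho,\sigma) = -\alpha(1-\beta)^2\rho^2 + \gamma(1-\beta)\sigma + \tfrac{\gamma(1-\beta)}{2}\Bigl(n+\tfrac{2-\mu}{2+\mu}\Bigr),
\]
and substituting the sharp upper bound on $\sigma$ supplied by admissibility should make the explicit constant term cancel the corresponding piece extracted from the $\sigma$-bound, leaving only a nonpositive multiple of $1+\rho^2$ by virtue of $\alpha\geq 0$. Once admissibility is in hand, Theorem~\ref{thmsumit} yields $\RE q(z) > 0$, which is exactly the desired subordination.

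The main obstacle is precisely this admissibility check. One must track the sign of $1-\beta$ carefully, since it governs the direction of the inequality coming from the $\sigma$-threshold and thus the shape of the admissibility region for $q$; the regimes $\beta<1$ and $\beta>1$ must be treated separately. The hard part is confirming that the constant induced by the definition of $\delta$ telescopes exactly with the contribution from the $\sigma$-bound, leaving a manifestly nonpositive remainder---the stated formula for $\delta$ is engineered to make this telescoping work on the nose.
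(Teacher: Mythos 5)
Your proposal is essentially the paper's proof: the same substitution $q=(p-\beta)/(1-\beta)$, the same admissible function $\psi(r,s)=(1-\beta)(1-\alpha+2\alpha\beta)r+\alpha(1-\beta)^2r^2+\gamma(1-\beta)s+(1-\alpha)\beta+\alpha\beta^2-\delta$, and the same appeal to Theorem~\ref{thmsumit}; moreover the cancellation you hedge on does work exactly as you predict, since for $\beta<1$ substituting $\sigma\le-\tfrac12\bigl(n+\tfrac{2-\mu}{2+\mu}\bigr)(1+\rho^2)$ into your formula for $\RE\psi(i\rho,\sigma)$ leaves $\RE\psi(i\rho,\sigma)\le-\rho^2\bigl[\alpha(1-\beta)^2+\tfrac{\gamma}{2}(1-\beta)\bigl(n+\tfrac{2-\mu}{2+\mu}\bigr)\bigr]\le0$. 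The two cautions you raise (the sign of $1-\beta$ when $\beta>1$, and the fact that $q$ has initial coefficient $\mu/(1-\beta)$ rather than $\mu$) are genuine subtleties, but the paper's own proof does not treat them either: it tacitly multiplies the $\sigma$-bound by $\gamma(1-\beta)>0$ and verifies admissibility with the original $\mu$, exactly as your sketch effectively does.
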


\begin{proof}
 Define the function $q:\mathbb{D}\rightarrow\mathbb{C}$ by $q(z)=(p(z)-\beta)/(1-\beta)$. Then $q$ is analytic and $(1-\beta)q(z)+\beta=p(z)$. By using this, the  inequality \eqref{eq1} can then be written as
 \[\begin{split}\RE\Big[ (1-\beta)(1-\alpha+2\alpha\beta)q(z)+\alpha(1-\beta)^2 q^2(z)\\
 +\gamma(1-\beta)zq'(z)+(1-\alpha)\beta+\alpha\beta^2-\delta\Big]>0.\end{split}\] Define the function $\psi:\mathbb{C}\rightarrow\mathbb{C}$ by \[\psi(r,s)=(1-\beta)(1-\alpha+2\alpha\beta)r+\alpha(1-\beta)^2 r^2+\gamma(1-\beta)s+(1-\alpha)\beta+\alpha\beta^2-\delta.\] For $\rho\in \mathbb{R}$, $n\geq1$ and $\sigma$ satisfying \eqref{sigma2}, it follows that
\begin{align*}
&\RE \psi(i\rho, \sigma)
\\&=\RE\left[(1-\beta)(1-\alpha+2\alpha\beta)i\rho-\alpha(1-\beta)^2 \rho^2+\gamma(1-\beta)\sigma+(1-\alpha)\beta+\alpha\beta^2-\delta \right]
\\&= \gamma(1-\beta)\sigma-\alpha(1-\beta)^2 \rho^2+(1-\alpha)\beta+\alpha\beta^2-\delta
\\&\leq \gamma (1-\beta)\left[-\frac{1}{2}\left(n+\frac{2-\mu}{2+\mu}\right)(1+\rho^2)\right]-\alpha(1-\beta)^2 \rho^2+(1-\alpha)\beta+\alpha\beta^2-\delta
\\&=-\frac{\gamma}{2}(1-\beta)\left(n+\frac{2-\mu}{2+\mu}\right)(1+\rho^2)-\alpha(1-\beta)^2 (\rho^2+1)+\alpha(1-\beta)^2\\&\quad{}+(1-\alpha)\beta+\alpha\beta^2-\delta
\\&=-(1+\rho^2)\left[\frac{\gamma}{2}(1-\beta)\left(n+\frac{2-\mu}{2+\mu}\right)+\alpha(1-\beta)^2\right]
\\&\quad{} +\alpha(1-\beta)^2+(1-\alpha)\beta+\alpha\beta^2-\delta
\\&\leq -\frac{\gamma}{2}(1-\beta)\left(n+\frac{2-\mu}{2+\mu}\right)+(1-\alpha)\beta+\alpha\beta^2-\delta.
\end{align*} Hence $\RE \psi(i\rho, \sigma)\leq 0$. By Theorem \ref{thmsumit}, $\RE q(z)>0$ or equivalently $\RE p(z)>\beta$. \end{proof}

%\begin{align*}(*) \quad &-(1+\rho^2)A\leq -A\\& \Rightarrow -\rho^2 A\leq 0\\& \Rightarrow A\geq 0, \quad \text{where} \quad A=\frac{\gamma}{2}(1-\beta)\left(n+\frac{2-\mu}{2+\mu}\right)+\alpha(1-\beta)^2\end{align*}

\begin{lemma} \label{th2.2} Let  $\beta\neq 1$, $\gamma>0$,  and $0\leq\mu\leq2$. For function $p\in \mathcal{H}_{\mu,n}$ and \[\delta:=-\frac{\gamma}{2}(1-\beta)\left(n+\frac{2-\mu}{2+\mu}\right)+\beta\] if  $p$  satisfies
\begin{equation*}
 p(z) +\gamma zp'(z)\prec \frac{1+(1-2\delta)z}{1-z}\end{equation*} then \[ p(z)\prec \frac{1+(1-2\beta)z}{1-z}.\]
\end{lemma}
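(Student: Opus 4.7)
The plan is to follow the admissibility-function template already used in the proof of Lemma \ref{th2.1}. The key observation is that Lemma \ref{th2.2} is exactly the specialization $\alpha=0$ of Lemma \ref{th2.1}: setting $\alpha=0$ collapses the quadratic term $\alpha p^{2}(z)$, and the constant $(1-\alpha)\beta+\alpha\beta^{2}$ in the definition of $\delta$ reduces to $\beta$, so both the hypothesis and the conclusion match verbatim. Thus the statement is an immediate corollary of Lemma \ref{th2.1}, and a direct proof is simply the specialization of the earlier argument.

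For a self-contained argument I would introduce the shifted function $q(z)=(p(z)-\beta)/(1-\beta)$, which is analytic on $\mathbb{D}$ with $q(0)=1$, and use $p(z)=(1-\beta)q(z)+\beta$ to rewrite the subordination hypothesis as $\RE\psi(q(z),zq'(z))>0$ for
\[\psi(r,s):=(1-\beta)r+\gamma(1-\beta)s+\beta-\delta.\]
Next, I would verify the admissibility condition \eqref{sigma2}: for $\rho\in\mathbb{R}$ and $\sigma\leq-\tfrac{1}{2}\bigl(n+\tfrac{2-\mu}{2+\mu}\bigr)(1+\rho^{2})$, one computes
\[\RE\psi(i\rho,\sigma)=\gamma(1-\beta)\sigma+\beta-\delta\leq -\tfrac{\gamma(1-\beta)}{2}\left(n+\tfrac{2-\mu}{2+\mu}\right)(1+\rho^{2})+\beta-\delta,\]
and substituting the stipulated value of $\delta$ (using $1+\rho^{2}\geq 1$) makes this bound $\leq 0$. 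Theorem \ref{thmsumit} applied to $q$ then yields $\RE q(z)>0$, which is exactly the desired subordination $p(z)\prec(1+(1-2\beta)z)/(1-z)$.

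The only point requiring mild care is the sign step $\gamma(1-\beta)\sigma\leq\gamma(1-\beta)\cdot[\text{upper bound on }\sigma]$, which uses $\gamma(1-\beta)\geq 0$, i.e.\ $\beta\leq 1$, the natural regime for the paper's applications. Beyond this sign bookkeeping there is no genuine obstacle, and nothing new beyond what has already been carried out in the proof of Lemma \ref{th2.1}.
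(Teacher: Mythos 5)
Your proposal is correct and matches the paper's own proof, which consists precisely of setting $\alpha=0$ in Lemma \ref{th2.1}; your additional self-contained admissibility computation is just the $\alpha=0$ instance of the argument already given for that lemma. The sign caveat you note (needing $\gamma(1-\beta)\geq 0$, i.e.\ $\beta<1$, in the step bounding $\gamma(1-\beta)\sigma$) is equally present in the paper's proof of Lemma \ref{th2.1}, so it introduces no new gap relative to the paper.
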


\begin{proof}
Replace $\alpha=0$ in Lemma \ref{th2.1}  to yield the result.
%Let $q(z)=(p-\beta)/(1-\beta)$ or $(1-\beta)q+\beta=p(z)$. Then
%\begin{equation}\label{eq2}
%\RE \left[\alpha((1-\beta)q(z)+\beta)+\gamma(1-\beta)zq'(z)-\delta\right]>0\end{equation} implies $\RE q(z)>0$. From \eqref{eq2}, yields
%\[\RE \left[\alpha(1-\beta)q(z)+\gamma(1-\beta)zq'(z)+\alpha\beta-\delta\right]>0\] implies $\RE q(z)>0$. Hence \[\psi(r,s)=\alpha(1-\beta)r+\gamma(1-\beta)s+\alpha\beta-\delta.\]
%It follows that
%\begin{align*}
%\RE \psi(i\rho, \sigma)&=\RE\left[\alpha(1-\beta)i\rho+\gamma(1-\beta)\sigma+\alpha\beta-\delta\right]
%\\&=\gamma(1-\beta)\sigma+\alpha\beta-\delta
%\\&\leq\gamma(1-\beta)\left[-\frac{1}{2}\left(n+\frac{2-\mu}{2+\mu}\right)(1+\rho^2)\right]+\alpha\beta-\delta
%\\&=-\frac{\gamma}{2}(1-\beta)\left(n+\frac{2-\mu}{2+\mu}\right)(1+\rho^2)+\alpha\beta-\delta
%\\&\leq -\frac{\gamma}{2}(1-\beta)\left(n+\frac{2-\mu}{2+\mu}\right)+\alpha\beta-\delta.
%\end{align*}Since $\RE \psi(i\rho, \sigma)\leq 0$, \[\delta\geq-\frac{\gamma}{2}(1-\beta)\left(n+\frac{2-\mu}{2+\mu}\right)+\alpha\beta.\]
\end{proof}

\begin{lemma} \label{th2.3}Let $\alpha>0$, $\beta\neq1$ and $0\leq\mu\leq2$. Let \[\delta=\left\{
                                                           \begin{array}{ll}
                                                             \frac{-\alpha\beta}{2(1-\beta)}\left(n+\frac{2-\mu}{2+\mu}\right)+\beta:=\delta_2 , & if \quad \hbox{$\beta\leq \frac{1}{2}$,} \\[5pt]
                                                             %\frac{-\alpha}{2}\left(n+\frac{2-\mu}{2+\mu}\right)+\frac{1}{2}, & if \quad \hbox{$\beta= \frac{1}{2}$,} \\[5pt]
                                                             \frac{-\alpha}{2\beta}(1-\beta)\left(n+\frac{2-\mu}{2+\mu}\right)+\beta:=\delta_1, & if \quad \hbox{$\frac{1}{2}\leq\beta$,}
                                                           \end{array}
                                                         \right. .\] If the function $p\in \mathcal{H}_{\mu,n}$ satisfies
\begin{equation}\label{eq2}
 p(z) +\alpha \frac{zp'(z)}{p(z)}\prec \frac{1+(1-2\delta)z}{1-z}
 \end{equation} then \[ p(z)\prec \frac{1+(1-2\beta)z}{1-z} .\]
\end{lemma}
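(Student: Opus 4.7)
The plan is to mimic the proof of Lemma \ref{th2.1}, but with the non-polynomial term $\alpha zp'(z)/p(z)$ handled by a direct computation of the real part at a purely imaginary value of $p$. First I introduce the auxiliary function $q(z) := (p(z)-\beta)/(1-\beta)$, so that showing $p(z) \prec (1+(1-2\beta)z)/(1-z)$ is equivalent to showing $\RE q(z)>0$. Substituting $p=(1-\beta)q+\beta$ and $zp'(z)=(1-\beta)zq'(z)$ into \eqref{eq2}, the hypothesis rewrites as $\RE\psi(q(z),zq'(z))>0$, where
\[\psi(r,s) \;=\; (1-\beta)r + \beta + \frac{\alpha(1-\beta)s}{(1-\beta)r + \beta} - \delta.\]

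Next I evaluate $\RE\psi(i\rho,\sigma)$ for $\rho\in\mathbb{R}$ and $\sigma$ satisfying \eqref{sigma2}. Using $\RE\frac{1}{\beta+(1-\beta)i\rho} = \beta/(\beta^2+(1-\beta)^2\rho^2)$, a short calculation gives
\[\RE\psi(i\rho,\sigma) \;=\; \beta + \frac{\alpha\beta(1-\beta)\sigma}{\beta^2+(1-\beta)^2\rho^2} - \delta.\]
Since $\alpha\beta(1-\beta)>0$ in the relevant range $0<\beta<1$, the coefficient of $\sigma$ is positive, so the admissibility bound $\sigma\leq -\tfrac12(n+(2-\mu)/(2+\mu))(1+\rho^2)$ yields
\[\RE\psi(i\rho,\sigma) \;\leq\; \beta - \frac{\alpha\beta(1-\beta)}{2}\left(n+\frac{2-\mu}{2+\mu}\right) R(\rho) - \delta,\]
where $R(\rho) := (1+\rho^2)/(\beta^2+(1-\beta)^2\rho^2)$.

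The technical heart of the argument is a $\rho$-free lower bound on $R$. Writing $t=\rho^2\geq 0$, elementary calculus shows that the sign of the derivative of $(1+t)/(\beta^2+(1-\beta)^2 t)$ coincides with the sign of $2\beta-1$. Hence $\min_\rho R(\rho) = 1/(1-\beta)^2$ when $\beta\leq 1/2$ (attained as $\rho\to\infty$) and $\min_\rho R(\rho) = 1/\beta^2$ when $\beta\geq 1/2$ (attained at $\rho=0$). Substituting each minimum in turn and comparing with the matching branch of $\delta$ causes the residual terms to cancel exactly, giving $\RE\psi(i\rho,\sigma)\leq 0$ in both cases. Theorem \ref{thmsumit} applied to $q$ then delivers $\RE q(z)>0$, equivalently $\RE p(z)>\beta$, which is the desired subordination.

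The main obstacle is the piecewise minimization of $R(\rho)$ and the verification that the two extremal values combine with the two branches of $\delta$ to produce an exact cancellation; this is what forces the case split at $\beta=1/2$ in the statement. A minor delicacy is that the quotient $zp'(z)/p(z)$ presupposes $p(z)\neq 0$, which is built into the meaningfulness of the subordination hypothesis.
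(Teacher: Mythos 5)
Your proposal is correct and follows essentially the same route as the paper: the same substitution $q=(p-\beta)/(1-\beta)$, the same admissible function $\psi(r,s)=(1-\beta)r+\alpha(1-\beta)s/((1-\beta)r+\beta)+\beta-\delta$, the same bound on $\RE\psi(i\rho,\sigma)$ via \eqref{sigma2}, the same minimization of $(1+\rho^2)/(\beta^2+(1-\beta)^2\rho^2)$ with the case split at $\beta=1/2$, and the same appeal to Theorem \ref{thmsumit}. Your explicit remark that the argument uses $\beta(1-\beta)>0$ is a fair observation of an assumption the paper leaves implicit, but it does not change the method.
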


\begin{proof}
 Similar to the proof of Lemma \ref{th2.1}, let $q:\mathbb{D}\rightarrow\mathbb{C}$ be given by $q(z)=(p(z)-\beta)/(1-\beta)$. Then inequality \eqref{eq2} can be written as
\begin{equation}\label{eq3} \RE \left[(1-\beta)q(z)+\beta+\frac{\alpha(1-\beta)}{(1-\beta)q(z)+\beta}zq'(z)-\delta\right]>0.\end{equation}  Define the function $\psi:\mathbb{C}\rightarrow\mathbb{C}$ by \[\psi(r,s)=(1-\beta)r+\frac{\alpha(1-\beta)}{(1-\beta)r+\beta}s+\beta-\delta.\] Then $\RE\psi(q(z),zq'(z))>0$ and $\RE \psi(1,0)>0$. To show that $\psi\in \Psi_{\mu, n}$, by using \eqref{sigma2}, it follows that
\begin{align*}
\RE \psi(i\rho, \sigma)&=\RE\left[(1-\beta)i\rho+\frac{\alpha(1-\beta)}{(1-\beta)i\rho+\beta}\sigma+\beta-\delta\right]
\\&=\RE\left[(1-\beta)i\rho+\frac{\alpha\beta(1-\beta)}{\beta^2+(1-\beta)^2\rho^2}\sigma-\frac{\alpha(1-\beta)^2 i\rho}{\beta^2+(1-\beta)^2\rho^2}\sigma +\beta-\delta\right]
\\&=\frac{\alpha\beta(1-\beta)}{\beta^2+(1-\beta)^2\rho^2}\sigma+\beta-\delta
\\&\leq \frac{\alpha\beta(1-\beta)}{\beta^2+(1-\beta)^2\rho^2}\left[-\frac{1}{2}\left(n+\frac{2-\mu}{2+\mu}\right)(1+\rho^2)\right]+\beta-\delta
\\&=-\frac{\alpha\beta}{2}(1-\beta)\left(n+\frac{2-\mu}{2+\mu}\right)\left(\frac{1+\rho^2}{\beta^2+(1-\beta)^2\rho^2}\right)+\beta-\delta.
\end{align*}
For ${1}/{2}\leq\beta$, the expression \[\frac{1+\rho^2}{\beta^2+(1-\beta)^2\rho^2}\] attains minimum at $\rho=0$ and therefore \begin{align*}\RE \psi(i\rho, \sigma)&\leq-\frac{\alpha\beta}{2}(1-\beta)\left(n+\frac{2-\mu}{2+\mu}\right)\frac{1}{\beta^2}+\beta-\delta_1
\\&=\frac{-\alpha}{2\beta}(1-\beta)\left(n+\frac{2-\mu}{2+\mu}\right)+\beta-\delta_1.
 \end{align*}
Hence $\RE \psi(i\rho, \sigma)\leq 0$.

For $\beta\leq {1}/{2}$,  \begin{align*}\RE \psi(i\rho, \sigma)&\leq-\frac{\alpha\beta}{2}(1-\beta)\left(n+\frac{2-\mu}{2+\mu}\right)\frac{1}{(1-\beta)^2}+\beta-\delta_2
\\&=\frac{-\alpha\beta}{2(1-\beta)}\left(n+\frac{2-\mu}{2+\mu}\right)+\beta-\delta_2.
 \end{align*} Hence $\RE \psi(i\rho, \sigma)\leq 0$. Thus Theorem \ref{thmsumit} implies $\RE q(z)>0$ or equivalently $\RE p(z)>\beta$.
\end{proof}

%\begin{align*}(*) \quad \left(\frac{1+\rho^2}{\beta^2+(1-\beta)^2\rho^2}\right)&\leq \frac{1}{\beta^2}
%\\ \Rightarrow \beta^2+\beta^2\rho^2&\leq\beta^2+(1-\beta)^2\rho^2
%\\ \Rightarrow \beta&\leq1-\beta
%\\ \Rightarrow \beta&\leq 1/2.\end{align*}
%
%\begin{align*}(**) \quad \left(\frac{1+\rho^2}{\beta^2+(1-\beta)^2\rho^2}\right)&\leq \frac{1}{(1-\beta)^2}
%\\ \Rightarrow (1-\beta)^2(1+\rho^2)&\leq\beta^2+(1-\beta)^2\rho^2
%\\ \Rightarrow (1-\beta)^2&\leq\beta^2
%\\ \Rightarrow \beta&\geq 1/2.\end{align*}

\begin{lemma} \label{th2.4} Let $\beta\neq 1$ and $0\leq\mu\leq2$. Let \[\delta=\left\{
\begin{array}{ll}
    \frac{-\beta}{2(1-\beta)}\left(n+\frac{2-\mu}{2+\mu}\right), & if \quad \hbox{$\beta< \frac{1}{2}$,} \\[5pt]
       \frac{-1}{2\beta}(1-\beta)\left(n+\frac{2-\mu}{2+\mu}\right)  , & if \quad \hbox{$\frac{1}{2}\leq\beta$,}
     \end{array}
     \right. \quad  (z\in\mathbb{D}).\] If the function $p\in \mathcal{H}_{\mu,n}$  satisfies
\begin{equation*}
  \frac{zp'(z)}{p(z)}\prec - \frac{2\delta z}{1-z} \end{equation*} then \[ p(z)\prec \frac{1+(1-2\beta)z}{1-z}.\]
\end{lemma}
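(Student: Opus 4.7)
My plan is to mirror the strategy of Lemmas \ref{th2.1} and \ref{th2.3}. Set $q(z) = (p(z)-\beta)/(1-\beta)$, so that $q$ is analytic on $\mathbb{D}$ with $q(0)=1$; it then suffices to show $\RE q(z) > 0$. The case $\beta = 0$ is trivial, since $\delta = 0$ forces $zp'(z) \equiv 0$ and hence $p \equiv 1$, so I would assume $\beta \neq 0$ henceforth.

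The hypothesis $\frac{zp'(z)}{p(z)} \prec -\frac{2\delta z}{1-z}$ asserts that $zp'(z)/p(z)$ lies in the half-plane $\{w : \RE(w/(-2\delta)) > -1/2\}$. Dividing by $-2\delta$ and rearranging (a sign-independent reformulation) shows that this is equivalent to
\[
\RE\left(1 - \frac{zp'(z)}{\delta\,p(z)}\right) > 0.
\]
Using $p = (1-\beta)q + \beta$ and $zp' = (1-\beta)zq'$, the left-hand side becomes $\RE\,\psi(q(z), zq'(z))$, where
\[
\psi(r,s) \;=\; 1 - \frac{(1-\beta)\,s}{\delta\bigl[(1-\beta)\,r + \beta\bigr]}, \qquad \psi(1,0) = 1.
\]

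To invoke Theorem \ref{thmsumit} I would verify admissibility by evaluating
\[
\RE\,\psi(i\rho,\sigma) \;=\; 1 - \frac{\beta(1-\beta)\,\sigma}{\delta\bigl[\beta^2 + (1-\beta)^2 \rho^2\bigr]},
\]
the imaginary contribution from $(1-\beta)i\rho+\beta$ dropping out upon taking the real part. A direct sign check confirms that $\beta(1-\beta)/\delta < 0$ in both branches of the definition of $\delta$, so inserting the admissibility bound $\sigma \leq -\tfrac{1}{2}K(1+\rho^2)$ (with $K := n + (2-\mu)/(2+\mu) > 0$) reverses direction favorably and yields
\[
\RE\,\psi(i\rho,\sigma) \;\leq\; 1 - \frac{M(1+\rho^2)}{\beta^2 + (1-\beta)^2\rho^2}, \qquad M := -\frac{\beta(1-\beta)\,K}{2\delta}.
\]

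The decisive step, and the reason $\delta$ is defined piecewise at $\beta = 1/2$, is to ensure the quadratic inequality $M(1+\rho^2) \geq \beta^2 + (1-\beta)^2\rho^2$ holds for every $\rho\in\mathbb{R}$; rewritten as $(M - (1-\beta)^2)\rho^2 + (M-\beta^2) \geq 0$, this forces $M \geq \max(\beta^2, (1-\beta)^2)$. Since $\beta^2 \geq (1-\beta)^2 \Leftrightarrow \beta \geq 1/2$, the case split is natural. Plugging in the two formulas for $\delta$, one finds $M = \beta^2$ when $\beta \geq 1/2$ and $M = (1-\beta)^2$ when $\beta < 1/2$, so the required bound holds with equality in the larger-of-the-two term. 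Hence $\RE\,\psi(i\rho,\sigma) \leq 0$, Theorem \ref{thmsumit} yields $\RE q(z) > 0$, and the desired subordination follows. The main obstacle I anticipate is simply careful sign-tracking, since $\delta$ changes sign depending on whether $\beta$ lies inside or outside $(0,1)$; phrasing the hypothesis through the sign-independent inequality above lets the rest of the argument proceed uniformly.
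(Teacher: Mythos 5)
Your proof is correct and takes essentially the same route as the paper's: the substitution $q=(p-\beta)/(1-\beta)$, verification of the admissibility inequality \eqref{sigma2} for $\RE\psi(i\rho,\sigma)$, the same case split at $\beta=1/2$ (your condition $M\geq\max(\beta^2,(1-\beta)^2)$ is just a repackaging of the paper's minimization of $(1+\rho^2)/(\beta^2+(1-\beta)^2\rho^2)$ at $\rho=0$ or $\rho\to\infty$), and an appeal to Theorem \ref{thmsumit}. The only difference is cosmetic: you normalize the hypothesis by dividing by $\delta$ and use $\psi(r,s)=1-\frac{(1-\beta)s}{\delta[(1-\beta)r+\beta]}$ (hence your separate trivial case $\beta=0$), whereas the paper keeps $\psi(r,s)=\frac{(1-\beta)s}{(1-\beta)r+\beta}-\delta$; your sign-independent formulation is, if anything, slightly more careful about the cases $\beta<0$ and $\beta>1$.
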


\begin{proof}
Let $q(z)=(p(z)-\beta)/(1-\beta)$ or $(1-\beta)q(z)+\beta=p(z)$. Then
\begin{equation}\label{eq4} \frac{zp'(z)}{p(z)}= \frac{(1-\beta)}{(1-\beta)q(z)+\beta}zq'(z).\end{equation} Define $\psi:\mathbb{C}^2 \rightarrow\mathbb{C}$ by
\[\psi(r,s)=\frac{(1-\beta)}{(1-\beta)r+\beta}s-\delta.\] Then $\psi(r,s)$ is continuous on $\mathbb{C}-\{-\beta/(1-\beta)\}$ and by using \eqref{sigma2}, it follows that
\begin{align*}
\RE \psi(i\rho, \sigma)&=\RE\left(\frac{(1-\beta)}{(1-\beta)i\rho+\beta}\sigma-\delta\right)
\\&=\RE \left(\frac{\beta(1-\beta)}{\beta^2+(1-\beta)^2\rho^2}\sigma-\frac{(1-\beta)^2 i\rho}{\beta^2+(1-\beta)^2\rho^2}\sigma-\delta\right)
\\&=\frac{\beta(1-\beta)}{\beta^2+(1-\beta)^2\rho^2}\sigma-\delta
\\&\leq\frac{\beta(1-\beta)}{\beta^2+(1-\beta)^2\rho^2}\left[-\frac{1}{2}\left(n+\frac{2-\mu}{2+\mu}\right)(1+\rho^2)\right]-\delta
\\&=-\frac{\beta}{2}(1-\beta)\left(n+\frac{2-\mu}{2+\mu}\right)\left(\frac{1+\rho^2}{\beta^2+(1-\beta)^2\rho^2}\right)-\delta.
\end{align*}For ${1}/{2}\leq\beta$,  the expression \[\frac{1+\rho^2}{\beta^2+(1-\beta)^2\rho^2}\] attains its minimum at $\rho=0$ and therefore
\begin{align*}\RE \psi(i\rho, \sigma)&\leq-\frac{\beta}{2}(1-\beta)\left(n+\frac{2-\mu}{2+\mu}\right)\frac{1}{\beta^2}-\delta
\\&=\frac{-1}{2\beta}(1-\beta)\left(n+\frac{2-\mu}{2+\mu}\right)-\delta.
 \end{align*} Hence $\RE \psi(i\rho, \sigma)\leq 0$.

For $\beta\leq {1}/{2}$, \begin{align*}\RE \psi(i\rho, \sigma)&\leq-\frac{\beta}{2}(1-\beta)\left(n+\frac{2-\mu}{2+\mu}\right)\frac{1}{(1-\beta)^2}-\delta
\\&=\frac{-\beta}{2(1-\beta)}\left(n+\frac{2-\mu}{2+\mu}\right)-\delta.
 \end{align*}Hence $\RE \psi(i\rho, \sigma)\leq 0$. Thus Theorem \ref{thmsumit} implies $\RE q(z)>0$ or equivalently $\RE p(z)>\beta$.
\end{proof}

\begin{lemma}\label{th2.5} Let $\alpha>0$, $\beta\neq 1$, and $0\leq\mu\leq2$. Let \[\delta=\left\{
                                                           \begin{array}{ll}
                                                             \frac{-1}{2}\frac{(1-\beta)}{(\alpha\beta+\gamma)}\left(n+\frac{2-\mu}{2+\mu}\right)+\beta, & if \quad \hbox{$\gamma\geq\alpha(1-2\beta)$,} \\[5pt]
                                                            \frac{-1}{2}\frac{(\alpha\beta+\gamma)}{\alpha^2(1-\beta)}\left(n+\frac{2-\mu}{2+\mu}\right)+\beta, & if \quad \hbox{$\gamma\leq\alpha(1-2\beta)$,}
                                                           \end{array}
                                                         \right. \quad  (z\in\mathbb{D}).\]If the function $p\in \mathcal{H}_{\mu,n}$  satisfies
\begin{equation*}
 p(z) +\frac{zp'(z)}{\alpha p(z)+\gamma}\prec \frac{1+(1-2\delta)z}{1-z}
 \end{equation*} then \[ p(z)\prec \frac{1+(1-2\beta)z}{1-z} .\]
\end{lemma}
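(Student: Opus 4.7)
The plan is to parallel the arguments in Lemmas~\ref{th2.1}--\ref{th2.4}, reducing the claim to the admissibility condition of Theorem~\ref{thmsumit} via the substitution $q(z)=(p(z)-\beta)/(1-\beta)$. Writing $p=(1-\beta)q+\beta$, the hypothesis becomes $\RE\psi(q(z),zq'(z))>0$, where
$$\psi(r,s)=(1-\beta)r+\beta-\delta+\frac{(1-\beta)s}{\alpha\beta+\gamma+\alpha(1-\beta)r}.$$
It then suffices to show $\RE\psi(i\rho,\sigma)\le 0$ for every real $\rho$ and every $\sigma$ satisfying \eqref{sigma2}, whereupon Theorem~\ref{thmsumit} yields $\RE q(z)>0$, equivalently $p(z)\prec (1+(1-2\beta)z)/(1-z)$.

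Substituting $r=i\rho$ and rationalizing the fraction, the imaginary contributions cancel and I obtain
$$\RE\psi(i\rho,\sigma)=\beta-\delta+\frac{(1-\beta)(\alpha\beta+\gamma)\sigma}{(\alpha\beta+\gamma)^2+\alpha^2(1-\beta)^2\rho^2}.$$
Using the bound \eqref{sigma2} on $\sigma$, together with the natural positivity $(1-\beta)(\alpha\beta+\gamma)>0$ that makes the coefficient of $\sigma$ positive, reduces the problem to minimizing
$$g(\rho)=\frac{1+\rho^2}{(\alpha\beta+\gamma)^2+\alpha^2(1-\beta)^2\rho^2}$$
over $\rho\in\mathbb{R}$, exactly as in the proofs of Lemmas~\ref{th2.3} and~\ref{th2.4}.

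A direct differentiation shows that the sign of $g'(\rho)$ for $\rho>0$ matches that of $(\alpha\beta+\gamma)^2-\alpha^2(1-\beta)^2=(\gamma-\alpha(1-2\beta))(\alpha+\gamma)$; under the implicit hypothesis $\alpha+\gamma>0$ this accounts for the dichotomy in the statement. When $\gamma\ge\alpha(1-2\beta)$ the function $g$ is non-decreasing, so $\min g=g(0)=1/(\alpha\beta+\gamma)^2$, and substitution recovers the first branch of $\delta$. When $\gamma\le\alpha(1-2\beta)$, $g$ decreases to $\inf g=1/(\alpha^2(1-\beta)^2)$ as $\rho\to\infty$, yielding the second branch. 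In either case the prescribed $\delta$ is exactly the threshold value that forces the upper bound on $\RE\psi(i\rho,\sigma)$ to be $\le 0$, completing the admissibility check.

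The principal obstacle is not conceptual but the sign bookkeeping: one must verify that $1-\beta$ and $\alpha\beta+\gamma$ share a sign (so that \eqref{sigma2} produces an upper rather than a lower bound) and that the factorization $(\gamma-\alpha(1-2\beta))(\alpha+\gamma)$ is used under $\alpha+\gamma>0$; one also has to observe that in the second case the relevant infimum of $g$ is not attained at a finite $\rho$, and argue that the pointwise estimate $\RE\psi(i\rho,\sigma)\le 0$ still holds for every admissible $(\rho,\sigma)$. Once these positivity and limit details are pinned down, the remainder of the argument is a direct adaptation of the computations already carried out in Lemmas~\ref{th2.3} and~\ref{th2.4}.
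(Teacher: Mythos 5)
Your proposal is correct and follows essentially the same route as the paper: the substitution $q=(p-\beta)/(1-\beta)$, the computation of $\RE\psi(i\rho,\sigma)$ with the coefficient $(1-\beta)(\alpha\beta+\gamma)\big/\bigl((\alpha\beta+\gamma)^2+\alpha^2(1-\beta)^2\rho^2\bigr)$, the bound \eqref{sigma2}, and the two-case minimization of $(1+\rho^2)\big/\bigl((\alpha\beta+\gamma)^2+\alpha^2(1-\beta)^2\rho^2\bigr)$ according to whether $\gamma\gtrless\alpha(1-2\beta)$. Your explicit remarks on the sign condition $(1-\beta)(\alpha\beta+\gamma)>0$ and on the infimum in the second case being a limit rather than an attained minimum are details the paper passes over silently, but they do not change the argument.
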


\begin{proof} Define
 $q(z)=(p-\beta)/(1-\beta)$ or $(1-\beta)q+\beta=p(z)$. Then
\begin{equation}\label{eq5} p(z) +\frac{zp'(z)}{\alpha p(z)+\gamma}=(1-\beta)q(z)+\beta+\frac{(1-\beta)}{\alpha[(1-\beta)q(z)+\beta]+\gamma}zq'(z).\end{equation} Define $\psi:\mathbb{C}^2 \rightarrow\mathbb{C}$ by \[\psi(r,s)=(1-\beta)r+\frac{(1-\beta)}{\alpha(1-\beta)r+\alpha\beta+\gamma}s+\beta-\delta.\] Thus $\psi(r,s)$ is continuous  and using \eqref{sigma2}, it follows that
\begin{align*}
\RE \psi(i\rho, \sigma)&=\RE\left[(1-\beta)i\rho+\frac{(1-\beta)}{\alpha(1-\beta)i\rho+\alpha\beta+\gamma}\sigma+\beta-\delta\right]
\\&=\frac{(1-\beta)(\alpha\beta+\gamma)}{(\alpha\beta+\gamma)^2+\alpha^2(1-\beta)^2\rho^2}\sigma+\beta-\delta
\\&\leq \frac{(1-\beta)(\alpha\beta+\gamma)}{(\alpha\beta+\gamma)^2+\alpha^2(1-\beta)^2\rho^2}\left[-\frac{1}{2}\left(n+\frac{2-\mu}{2+\mu}\right)(1+\rho^2)\right]+\beta-\delta
\\&=\frac{-1}{2}(1-\beta)(\alpha\beta+\gamma)\left(n+\frac{2-\mu}{2+\mu}\right)\left(\frac{1+\rho^2}{(\alpha\beta+\gamma)^2+\alpha^2(1-\beta)^2\rho^2}\right)+\beta-\delta
\end{align*}
For $\gamma\leq\alpha(1-2\beta)$, \begin{align*}
\RE \psi(i\rho, \sigma)&\leq \frac{-1}{2}(1-\beta)(\alpha\beta+\gamma)\left(n+\frac{2-\mu}{2+\mu}\right)\frac{1}{\alpha^2(1-\beta)^2}+\beta-\delta
\\&=\frac{-1}{2}\frac{(\alpha\beta+\gamma)}{\alpha^2(1-\beta)}\left(n+\frac{2-\mu}{2+\mu}\right)+\beta-\delta.
\end{align*} Hence $\RE \psi(i\rho, \sigma)\leq 0$.

For $\gamma\geq\alpha(1-2\beta)$, the expression \[\frac{1+\rho^2}{(\alpha\beta+\gamma)^2+\alpha^2(1-\beta)^2\rho^2}\] attains minimum at $\rho=0$ and therefore \begin{align*}
\RE \psi(i\rho, \sigma)&\leq \frac{-1}{2}(1-\beta)(\alpha\beta+\gamma)\left(n+\frac{2-\mu}{2+\mu}\right)\frac{1}{(\alpha\beta+\gamma)^2}+\beta-\delta
\\&=\frac{-1}{2}\frac{(1-\beta)}{(\alpha\beta+\gamma)}\left(n+\frac{2-\mu}{2+\mu}\right)+\beta-\delta.
\end{align*} Thus $\RE \psi(i\rho, \sigma)\leq 0$ and result follows.
\end{proof}

%\begin{align*}(*) \quad \left(\frac{1+\rho^2}{(\alpha\beta+\gamma)^2+\alpha^2(1-\beta)^2\rho^2}\right)&\leq \frac{1}{\alpha^2(1-\beta)^2}
%\\ \Rightarrow \alpha^2(1-\beta)^2+\alpha^2\rho^2(1-\beta)^2&\leq(\alpha\beta+\gamma)^2+\alpha^2\rho^2(1-\beta)^2
%\\ \Rightarrow \alpha(1-\beta)&\leq\alpha\beta+\gamma
%\\ \Rightarrow \gamma&\geq \alpha(1-2\beta).\end{align*}
%
%\begin{align*}(**) \quad \left(\frac{1+\rho^2}{(\alpha\beta+\gamma)^2+\alpha^2(1-\beta)^2\rho^2}\right)&\leq \frac{1}{(\alpha\beta+\gamma)^2}
%\\ \Rightarrow (\alpha\beta+\gamma)^2+\rho^2(\alpha\beta+\gamma)^2&\leq(\alpha\beta+\gamma)^2+\alpha^2(1-\beta)^2\rho^2
%\\ \Rightarrow \alpha\beta+\gamma&\leq\alpha(1-\beta)
%\\ \Rightarrow \gamma&\leq \alpha(1-2\beta).\end{align*}

\begin{lemma} \label{th2.6}Let  $\beta\neq 1$, $\gamma>0$,  and $0\leq\mu\leq2$. If the function $p\in \mathcal{H}_{\mu,n}$  satisfies
\begin{equation}\label{eq6}
 p^2(z) +\gamma zp'(z)\prec \frac{1+(1-2\delta)z}{1-z}  \quad (z\in\mathbb{D}) \end{equation} where \[\delta:=-\frac{\gamma}{2}(1-\beta)\left(n+\frac{2-\mu}{2+\mu}\right)+\beta^2\] then \[p(z)\prec \frac{1+(1-2\beta)z}{1-z} .\]
\end{lemma}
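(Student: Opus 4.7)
The plan is to recognize this lemma as the $\alpha=1$ specialization of Lemma \ref{th2.1}. Setting $\alpha=1$ in the hypothesis of that lemma gives exactly $p^2(z)+\gamma zp'(z)$ on the left, and its constant $\delta$ collapses to $-\frac{\gamma}{2}(1-\beta)(n+\frac{2-\mu}{2+\mu})+\beta^2$, matching the $\delta$ in the statement. The conclusion then follows immediately, in the same spirit that Lemma \ref{th2.2} was obtained from Lemma \ref{th2.1} with $\alpha=0$.

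For a self-contained proof following the template of the earlier lemmas, I would perform the standard shift $q(z)=(p(z)-\beta)/(1-\beta)$, so that $p(z)=(1-\beta)q(z)+\beta$. Expanding gives $p^2(z)=(1-\beta)^2 q^2(z)+2\beta(1-\beta)q(z)+\beta^2$ and $zp'(z)=(1-\beta)zq'(z)$, which turns the hypothesis into $\RE\psi(q(z),zq'(z))>0$ where
\[\psi(r,s)=(1-\beta)^2 r^2+2\beta(1-\beta)r+\beta^2+\gamma(1-\beta)s-\delta.\]
I would then verify the admissibility condition $\RE\psi(i\rho,\sigma)\leq 0$: the middle term $2\beta(1-\beta)i\rho$ is purely imaginary and vanishes from the real part, leaving $\RE\psi(i\rho,\sigma)=-(1-\beta)^2\rho^2+\beta^2+\gamma(1-\beta)\sigma-\delta$. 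Applying the bound \eqref{sigma2} and substituting the prescribed $\delta$, the $\beta^2$ terms cancel as do the $\rho$-independent pieces of the $\sigma$-bound and of $-\delta$; what remains is a manifestly non-positive expression of shape $-\rho^{2}\bigl[(1-\beta)^2+\tfrac{\gamma}{2}(1-\beta)(n+\tfrac{2-\mu}{2+\mu})\bigr]$. An appeal to Theorem \ref{thmsumit} then yields $\RE q(z)>0$, equivalently $p(z)\prec(1+(1-2\beta)z)/(1-z)$.

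I do not anticipate any serious obstacle; if anything, the computation is cleaner than in Lemma \ref{th2.1}, since the vanishing of $2\beta(1-\beta)i\rho$ removes the contribution that produced the $\alpha(1-\beta)^2$ bookkeeping there. The only subtle point is the implicit sign assumption $1-\beta>0$, which is needed when the inequality \eqref{sigma2} is multiplied through by the coefficient $\gamma(1-\beta)$ of $\sigma$; this matches the tacit convention already in force throughout Lemmas \ref{th2.1}--\ref{th2.5}.
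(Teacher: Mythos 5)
Your proposal is correct, and its primary route is genuinely shorter than the paper's: you observe that Lemma \ref{th2.6} is precisely Lemma \ref{th2.1} with $\alpha=1$ (the hypothesis becomes $p^2(z)+\gamma zp'(z)$ and the constant $\delta$ collapses to $-\tfrac{\gamma}{2}(1-\beta)\bigl(n+\tfrac{2-\mu}{2+\mu}\bigr)+\beta^2$, exactly as stated), whereas the paper does not exploit this and instead redoes the admissibility computation from scratch, even though it had already derived Lemma \ref{th2.2} from Lemma \ref{th2.1} by setting $\alpha=0$. Your backup self-contained argument coincides with the paper's proof: same shift $q=(p-\beta)/(1-\beta)$, same admissible function $\psi(r,s)=[(1-\beta)r+\beta]^2+\gamma(1-\beta)s-\delta$, same appeal to Theorem \ref{thmsumit}; your final bound $\RE\psi(i\rho,\sigma)\leq-\rho^{2}\bigl[(1-\beta)^2+\tfrac{\gamma}{2}(1-\beta)\bigl(n+\tfrac{2-\mu}{2+\mu}\bigr)\bigr]$ is in fact a slightly cleaner packaging than the paper's estimate via $1+\rho^2\geq 1$, and your remark that $1-\beta>0$ is tacitly needed when multiplying \eqref{sigma2} by $\gamma(1-\beta)$ is accurate --- the paper's own proof (and its hypothesis ``$\beta\neq1$'') shares exactly this unstated restriction, so it is not a defect of your argument relative to the source.
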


\begin{proof}
Define $q(z)=(p(z)-\beta)/(1-\beta)$ or $(1-\beta)q(z)+\beta=p(z)$. Using this it can be shown that inequality \eqref{eq6} can be written as
\[\RE \left[((1-\beta)q(z)+\beta)^2+\gamma(1-\beta)zq'(z)-\delta\right]>0.\]
 Then $\psi(r,s)$ is define by \[\psi(r,s)=[(1-\beta)r+\beta]^2+\gamma(1-\beta)s-\delta.\]  By using \eqref{sigma2},  it follows that
\begin{align*}
&\RE \psi(i\rho, \sigma)
\\&=\RE\left[((1-\beta)i\rho+\beta)^2+\gamma(1-\beta)\sigma-\delta \right]
\\&=-(1-\beta)^2\rho^2+\beta^2+\gamma(1-\beta)\sigma-\delta
\\&\leq\gamma(1-\beta)\left[-\frac{1}{2}\left(n+\frac{2-\mu}{2+\mu}\right)(1+\rho^2)\right]+\beta^2-(1-\beta)^2\rho^2-\delta
\\&=-\frac{\gamma}{2}(1-\beta)\left(n+\frac{2-\mu}{2+\mu}\right)(1+\rho^2)-(1-\beta)^2 (\rho^2+1)+(1-\beta)^2+\beta^2-\delta
\\&=-(1+\rho^2)\left[\frac{\gamma}{2}(1-\beta)\left(n+\frac{2-\mu}{2+\mu}\right)+(1-\beta)^2\right]+(1-\beta)^2+\beta^2-\delta
\\&\leq-\frac{\gamma}{2}(1-\beta)\left(n+\frac{2-\mu}{2+\mu}\right)+\beta^2-\delta.
\end{align*} Hence $\RE \psi(i\rho, \sigma)\leq 0$, and Theorem \ref{thmsumit} implies $\RE q(z)>0$ or equivalently $\RE p(z)>\beta$. \end{proof}

\begin{proof}[Proof of Theorem \ref{th2.7}] For a given function $f\in {\mathcal A}_{n,b}$,
let the function $p:\mathbb{D}\rightarrow \mathbb{C}$ be defined by  $p(z)=zf'(z)/f(z)$. Then computation shows that
$p(z)=1+nbz^n+\cdots \in \mathcal{H}_{\mu,n}$ where $\mu=nb$.
Further calculations  yeild
\begin{align*}&\frac{zf'(z)}{f(z)}\left(\alpha\frac{zf''(z)}{f'(z)}+1\right)=(1-\alpha)p(z)+\alpha p^{2}(z)+\alpha zp'(z),\\
&\frac{zf'(z)}{f(z)}\left(2+\frac{zf''(z)}{f'(z)}-\frac{zf'(z)}{f(z)}\right)=p(z)+zp'(z),\\
&(1-\alpha)\frac{zf'(z)}{f(z)}+\alpha\left(1+\frac{zf''(z)}{f'(z)}\right)=p(z)+\alpha \frac{zp'(z)}{p(z)},\\
& 1+\frac{zf''(z)}{f'(z)}-\frac{zf'(z)}{f(z)}= \frac{zp'(z)}{p(z)}.\end{align*}
Hence the result follows from Lemmas \ref{th2.1}, \ref{th2.2}, \ref{th2.3} and \ref{th2.4}.
\end{proof}

\begin{proof}[Proof of Theorem \ref{th2.7new}]
Let  $f\in \Sigma_{n,b}$, and define the function $p:\mathbb{D}\rightarrow \mathbb{C}$ be defined by $p(0)=1$ and  $p(z)=-zf'(z)/f(z)$ for $z\in \mathbb{D}^*$. Then  $p(z)=1-(n+1)bz^{n+1}+\cdots\in \mathcal{H}_{\mu,n}$ with $\mu=-(n+1)b$.
Simple computations shows that
\begin{align*}&\frac{zf'(z)}{f(z)}\left(2\alpha-1+\alpha\frac{zf''(z)}{f'(z)}\right)=(1-\alpha)p(z)+\alpha p^{2}(z)-\alpha zp'(z),\\
&\frac{zf'(z)}{f(z)}\left(\frac{zf''(z)}{f'(z)}-\frac{zf'(z)}{f(z)}\right)=p(z)-zp'(z),\\
&-\left((1-\alpha)\frac{zf'(z)}{f(z)}+\alpha\left(1+\frac{zf''(z)}{f'(z)}\right)\right)=p(z)-\alpha \frac{zp'(z)}{p(z)},\\
& 1+\frac{zf''(z)}{f'(z)}-\frac{zf'(z)}{f(z)}= \frac{zp'(z)}{p(z)}.\end{align*}
Hence the result follows from Lemmas \ref{th2.1}, \ref{th2.2}, \ref{th2.3} and \ref{th2.4}.
\end{proof}

\begin{proof}[Proof of Theorem \ref{th2.8}]
For $f\in {\mathcal A}_{n,b}$, let the function $p:\mathbb{D}\rightarrow \mathbb{C}$ be defined by
$p(z)=f'(z)$. Then $p(z) =1+(n+1)bz^n +(n+2)a_{n+2} z^{n+1}+\cdots   \in \mathcal{H}_{\mu,n}$ with $\mu=(n+1)b$.
Also, we have the following:
\begin{align*}&f'(z)\left(\alpha\left(\frac{zf''(z)}{f'(z)}+f'(z)-1\right)+1\right)=(1-\alpha)p(z)+\alpha p^{2}(z)+\alpha zp'(z),\\
&f'(z)+\alpha zf''(z)=p(z)+\alpha zp'(z),\\
&\alpha\frac{zf''(z)}{f'(z)}+f'(z)=p(z)+\alpha \frac{zp'(z)}{p(z)},\\
&\frac{zf''(z)}{f'(z)}= \frac{zp'(z)}{p(z)}.\end{align*}
Hence the result follows from Lemmas \ref{th2.1}, \ref{th2.2}, \ref{th2.3} and \ref{th2.4}.
\end{proof}

\begin{remark}
\begin{itemize}\item[]
  \item[(i)] For $\beta=0$, the condition \eqref{eq2.8}--\eqref{eq2.14} gives a sufficient condition for close-to-convexity and hence for univalence.
  \item[(ii)]If $\mu=2$, result \eqref{eq2.7} reduces to \cite[Theorem 2.1]{ravijipam}. If $\mu=2$, and $f'(z)$ is considered as $f(z)/z$, result \eqref{eq2.12} reduces to \cite[Theorem 2.4]{ravijipam}.
Inequality \eqref{eq2.13} reduces to \cite[Theorem 2, p. 182]{singhitsf} in the case when $\mu=2$, $n=1$ and $\beta=1/2$.
Furthermore, if $\mu=2$, $n=1$ and $\beta=(\alpha+1)/2$, result \eqref{eq2.14} reduces to \cite[Theorem 1]{owa}.
\end{itemize}

\end{remark}

\section*{Acknowledgements}  The research of the first and last authors are supported respectively by FRGS grant and   MyBrain MyPhD programme of the Ministry of Higher Education, Malaysia. The authors are thankful to the referee for his useful comments.


\begin{thebibliography}{99}


\bibitem{alijmaa}R. M. Ali, V. Ravichandran\ and\ N. Seenivasagan, On Bernardi's integral operator and the Briot-Bouquet differential subordination, \emph{J. Math.\ Anal.\ Appl.}\ {\bf 324} (2006), no.~1, 663--668.

\bibitem{alifil} R. M. Ali, N. E. Cho, N. Jain and V. Ravichandran, Radii of starlikeness and convexity of functions defined by subordination with fixed second coefficients, {\it Filomat} {\bf26} (2012), no.~3, 553–-561.

\bibitem{rmavr}  R. M. Ali, M. H. Mohd, S. K. Lee and  V. Ravichandran, Radii of starlikeness, parabolic starlikeness and strong starlikeness for Janowski starlike functions with complex parameters,  \emph{Tamsui Oxford J. Math. Sci.} 27 (2011),   no.~3, 253--267.

\bibitem{alibul} R. M. Ali, S. Nagpal\ and\ V. Ravichandran, Second-order differential subordination for analytic functions with fixed initial coefficient, \emph{Bull.\ Malays.\ Math.\ Sci.\ Soc.\ (2)} {\bf 34} (2011), no.~3, 611--629.



\bibitem{Lewandowski1}
Z. Lewandowski, S. Miller\ and\ E. Z\l otkiewicz, Generating functions for some classes of univalent functions, \emph{Proc.\ Amer.\ Math.\ Soc.}\ {\bf 56} (1976), 111--117.

\bibitem{owa2}
J.-L. Li\ and\ S. Owa, Sufficient conditions for starlikeness, \emph{Indian J. Pure Appl.\ Math.}\ {\bf 33} (2002), no.~3, 313--318.


\bibitem{liutam} M.-S. Liu, Y.-Y. Liu and Z.-W. Liu, Properties and characteristics of certain subclass of analytic functions with positive coefficients, \emph{Tamsui Oxford J. Inform.\ Math.\ Sci.}\ {\bf27} (2011), no.~1, 39--60.

\bibitem{liumcm}  M.-S. Liu, Y.-C. Zhu\ and\ H. M. Srivastava, Properties and characteristics of certain subclasses of starlike functions of order $\beta$, \emph{Math.\ Comput.\ Modelling} {\bf 48} (2008), no.~3-4, 402--419.

\bibitem{millmoc} S. S. Miller\ and\ P. T. Mocanu, {\it Differential Subordinations}, Dekker, New York, 2000.

\bibitem{nagapm} S. Nagpal and V. Ravichandran, Applications of theory of differential subordination for functions with fixed initial coefficient to univalent functions, \emph{Ann.\ Polon.\ Math.}\ {\bf105} (2012), 225--238.

\bibitem{nishi} J. Nishiwaki\ and\ S. Owa, Coefficient inequalities for certain analytic functions, \emph{Int.\ J. Math.\ Math.\ Sci.}\ {\bf 29} (2002), no.~5, 285--290.

    \bibitem{nunoind} M. Nunokawa, S. Owa, N. Takahashi and H. Saitoh, Sufficient conditions for Carath\'eodory functions, \emph{Indian J. Pure Appl.\ Math.}\ {\bf 33} (2002), no.~9, 1385--1390.

\bibitem{Nunokawa1}
M. Nunokawa, S. Owa, S. K.  Lee, M. Obradovic, M. K. Aouf, H. Saitoh, A. Ikeda\ and\ N. Koike, Sufficient conditions for starlikeness, \emph{Chinese J. Math.}\ {\bf 24} (1996), no.~3, 265--271.

\bibitem{Obradovich}
M. Obradovi\'c\ and\ S. B. Joshi, On certain classes of strongly starlike functions, \emph{Taiwanese J. Math.} {\bf 2} (1998), no.~3, 297--302.

\bibitem{owa2} S. Owa\ and\ H. M. Srivastava, Some generalized convolution properties associated with certain subclasses of analytic functions,  \emph{ J. Inequal.\ Pure Appl.\ Math.}\ 3 (2002), no.~3, Art. 42, 13~pp.

\bibitem{owa} S. Owa, M. Nunokawa, H. Saitoh and H. M. Srivastava, Close-to-convexity, starlikeness, and convexity of certain analytic functions, \emph{Appl.\ Math.\ Lett.}\ {\bf 15} (2002), no.~1, 63--69.







\bibitem{Padmanabhan1}
K. S. Padmanabhan, On sufficient conditions for starlikeness, \emph{Indian J. Pure Appl.\ Math.}\ {\bf 32} (2001), no.~4, 543--550.


\bibitem{KSP}
C. Ramesha, S. Kumar\ and\ K. S. Padmanabhan, A sufficient condition for starlikeness, \emph{Chinese J. Math.}\ {\bf 23} (1995), no.~2, 167--171.


\bibitem{Ravi2}
V. Ravichandran, Certain applications of first order differential subordination, \emph{Far East J. Math.\ Sci.}\  {\bf 12} (2004), no.~1, 41--51.


\bibitem{jayafjms} V. Ravichandran\ and\ M. Jayamala, On sufficient conditions for Caratheodory functions, \emph{Far East J. Math.\ Sci.}\ {\bf 12} (2004), no.~2, 191--201.

\bibitem{ravijipam} V. Ravichandran, C. Selvaraj\ and\ R. Rajalaksmi, Sufficient conditions for starlike functions of order $\alpha$, \emph{J. Inequal.\ Pure Appl.\ Math.}\ {\bf 3} (2002), no.~5, Art. 81, 6~pp.


\bibitem{vrkgs}  V. Ravichandran, H. Silverman, M. Hussain Khan, K. G. Subramanian, Radius problems for a class of analytic functions,   \emph{Demonstrat.\ Math.}\  39 (2006),   no.~1,   67--74.

\bibitem{singhaml}    S. Singh\ and\ S. Gupta, A differential subordination and starlikeness of analytic functions, \emph{Appl.\ Math.\ Lett.}\ {\bf 19} (2006), no.~7, 618--627.

\bibitem{singhitsf} V. Singh, S. Singh\ and\ S. Gupta, A problem in the theory of univalent functions, \emph{Integral Transforms Spec.\ Funct.}\ {\bf 16} (2005), no.~2, 179--186.

\bibitem{ural} B. A. Uralegaddi, M. D. Ganigi\ and\ S. M. Sarangi, Univalent functions with positive coefficients, \emph{Tamkang J. Math.}\ 25 (1994), no.~3, 225--230.

\bibitem{ural2} B. A. Uralegaddi\ and\ A. R. Desai, Convolutions of univalent functions with positive coefficients, \emph{Tamkang J. Math.}\ {\bf 29} (1998), no.~4, 279--285.

\bibitem{yangcma} D. Yang, S. Owa\ and\ K. Ochiai, Sufficient conditions for Carath\'eodory functions, \emph{Comput.\ Math.\ Appl.}\ {\bf 51} (2006), no.~3-4, 467--474.

\end{thebibliography}
\end{document}